\definecolor{MyLinkColor}{rgb}{0,0,0.4}
\newcommand{\R}{{\mathbb R}}
\newcommand{\Z}{{\mathbb Z}}
\newcommand{\N}{{\mathbb N}}
\newcommand{\s}{\mathbb S}
\newcommand{\h}{\rho}
\newcommand{\A}{\mathcal{A}}
\newcommand{\B}{\mathcal{B}}
\newcommand{\T}{\mathcal{T}}
\newcommand{\kL}{\mathcal{L}}
\newcommand{\ov}{\overline}
\newcommand{\p}{\partial}
\newcommand{\0}{\Omega}
\newcommand{\e}{\varepsilon}
\newcommand{\wh}{\widehat}
\newcommand{\Kern}{\mathop{\rm Ker}\nolimits}
\newcommand{\Isom}{\mathop{\rm Isom}\nolimits}
\newcommand{\im}{\mathop{\rm Im}\nolimits}
\newcommand{\tr}{\mathop{\rm tr}\nolimits}
\newcommand{\Ad}{\mathop{\rm Ad}\nolimits}
\newcommand{\spn}{\mathop{\rm span}\nolimits}
\newcommand{\Mf}{\mathcal F}
\newtheorem{thm}{Theorem}[section]
\newtheorem{lemma}[thm]{Lemma}
\theoremstyle{remark} 
\newtheorem{rem}[thm]{Remark}
   \theoremstyle{remark}
          \newtheorem*{acknowledgement*}{Acknowledgement}
\numberwithin{equation}{section}
\title[On stratified capillary-gravity water waves]{On the existence of steady periodic capillary-gravity  stratified water waves}
\subjclass[2010]{35Q35;  76B70; 76B47}
\keywords{Stratified water waves; Existence; Streamlines}
\author[D. Henry]{David Henry}
\address{School of Mathematical Sciences, Dublin City University, Glasnevin, Dublin 9, Ireland.}
\email{david.henry@dcu.ie}
\author[B.--V. Matioc]{Bogdan--Vasile Matioc}
\address{Institut f{\"u}r Angewandte Mathematik, Leibniz Universit{\"a}t Hannover, Welfengarten~1, 30167 Hannover, Germany.}
\email{matioc@ifam.uni-hannover.de}
\begin{document}

\begin{abstract}
We  prove the existence of small steady periodic capillary-gravity  water waves for general stratified flows, where we allow for stagnation points in the flow. We establish the existence of both laminar and non-laminar 
flow solutions for the governing equations. This is achieved by using bifurcation theory and estimates based on the ellipticity of the system, where we regard, in turn, the mass-flux and surface tension as bifurcation parameters.
\end{abstract}
 
\maketitle

\section{Introduction}\label{Intro}
In the following paper we prove the existence of small-amplitude two-dimensional steady periodic capillary-gravity stratified water waves, where we do not exclude stagnation points from the flow. Stratified water waves are heterogeneous flows where 
the density varies as a function of the streamlines.
  Physically, stratification is a very interesting phenomenon,
 and fluid density may be caused to fluctuate by a plethora of factors -- for example, salinity, temperature, pressure, topography, oxygenation.
 Mathematically, allowing for heterogeneity adds severe complications to the governing equations, making their analysis even more intractable. For capillary-gravity waves,
 the effects of surface tension are incorporated in the governing equations by adding a term proportional to the curvature of the wave profile into the surface boundary condition, which also complicates matters significantly.  
Nevertheless, as we have shown in our paper \cite{HM}, the remarkably nice regularity properties which 
have been recently proven to hold for a wide-variety of homogeneous flows \cite{CE3,Hen,Hen1,Mat,Mat1}, in the main apply
 also to these stratified flows. 

The first results concerning the existence of small-amplitude waves for stratified flows were obtained by Dubreil-Jacotin \cite{Dub2}, in $1937$. Interestingly, prior to this Dubreil-Jacotin successfully adapted the famous Gerstner's water wave \cite{Con,Hen2}, which is one of the few examples of explicit solutions for the full governing equations which exist, and which describes a rotational water wave in a homogeneous fluid, to the setting of a stratified fluid in \cite{Dub}.  More recently, rigorous results concerning the existence of small and large amplitude stratified flows were obtained via bifurcation methods in     \cite{Wa1}, building on techniques first applied to rotational homogeneous flows in \cite{CS2}. The existence of small and large amplitude waves for stratified flows with the additional complication of surface tension was then addressed in \cite{Wa3}, building on from local existence results for rotational capillary-gravity waves contained in \cite{Wah2}. 

All of the above existence results are heavily dependent on the absence of stagnation points for steady water waves. The condition which ensures the lack of stagnation points for a steady wave moving with constant speed $c$ is that all particles in the fluid have a horizontal velocity less than $c$. This is a physically reasonable assumption for water waves, without underlying currents containing strong non-uniformities, and which are not near breaking \cite{Light}.  Nevertheless, stagnation points are a very interesting phenomenon and ideally we would not wish to exclude them from our picture.  Stagnation points have long been a source of great interest and fascination in hydrodynamical research, dating back to Kelvin's work concerning ``cat's eyes'' and Stokes conjecture on the wave of greatest height (see \cite{Con1,Tol} for a thorough survey of Stokes' waves). Note that the physical meaning of a stagnation point is that of a fixed particle in the frame of reference moving with the wave-speed. However, 
there are stagnation points where $u=c,v=0$ but the wave does not actually stagnate there, for example the Stokes' wave of greatest height \cite{Con1,Tol}. Of course, allowing for the presence of stagnation points adds yet another major complication to the mathematical picture  (see the survey in \cite{CS3}) and we merely mention here that there have been two recent works \cite{ConVar,Wah3} where different approaches were used to show the existence of small amplitude waves with constant vorticity where stagnation points or critical layers occur. 

Mathematically rigorous work concerning the existence of small-amplitude waves for stratified flows which also admitted critical points was recently begun by one of the authors and his collaborators in \cite{EMM1}. Here the authors proved the existence of small amplitude waves for stratified flows which have a linear density distribution, and were also able to provide an explicit picture of the critical layers which were present in the resulting flows.
 In the following work, we prove the existence of small amplitude waves for stratified flows which have a more general density distribution function than that of \cite{EMM1}  and under conditions which could not be treated in 
\cite{Wa3}.

 The generality of the flows we admit complicates the analysis immensely, and the resulting governing equations which emerge take the form of an overdetermined semi-linear elliptic Dirichlet  problem. We then use and adapt various tools from the theory of elliptic equations and Fourier multiplier theory to prove, using the Crandall-Rabinowitz local bifurcation theorem, the existence of small amplitude water waves. 

The outline of the paper is as follows. In Section~\ref{W:1} we describe the Long-Yih \cite{Long,Turn,Yih} mathematical model for stratified capillary-gravity water waves, where we formulate  the governing equations equations in terms of the pseudo-streamfunction. 
Then, in Section~\ref{Lam}, we establish the existence of laminar flow solutions for the governing equations.
 Finally, in Section~\ref{Nonlam}, we prove, for a fixed volume of fluid, the existence of non-laminar flow solutions for the governing equations   using bifurcation theory and estimates based on the ellipticity of the system, culminating in our main results, Theorem ~\ref{MT1} and Theorem ~\ref{MT2}.

\section{The mathematical model} \label{W:1} 
We now present the Long-Yih \cite{Long,Turn,Yih} formulation of the governing equations for the motion of a two-dimensional inviscid, 
incompressible fluid with variable density. The equations will be formulated in terms of the velocity field $(u,v)$  of the fluid, $P$ 
the pressure distribution function, $\h$ the variable density function, and $g$ the gravitational constant of acceleration, in the fluid domain
\[
\0_\eta:=\{(x,y)\,:\,\text{$ x\in\s $ and $-1<y<\eta(t,x)$}\}.
\]
 The symbol $\s$ stands for the unit circle, and functions on $\s$ are identified with $2\pi-$periodic functions on $\R.$
 The function  $\eta$ describing the wave profile is assumed to satisfy $\eta(t,x)>-1$ for all $(t,x)$. For obvious reasons, we assume henceforth that the density $\h>0$.
 Taking $y = 0$ to represent the location of the undisturbed water surface we assume for any fixed time $t$ that
\begin{equation}\label{VC}
\int_\s\eta(t,x)\, dx=0.
\end{equation}
 
Restricting our investigations to steady travelling wave solutions of the governing equations  
enables us to express the problem in a moving reference frame where the flow is steady i.e. time independent, thereby simplifying the water wave problem.
We do this by supposing that there exists a positive constant $c$,
 called the {\em wave speed}, such that all functions  have an $(x,y,t)$ dependence of the form $(x-ct,y)$:
\begin{equation*}
\eta(t,x)=\eta(x-ct), \qquad (\h,u,v,P)(t,x,y)=(\h,u,v,P)(x-ct,y).
\end{equation*}
Under the assumption of constant temperature and zero viscosity, the motion of the fluid is prescribed by the steady state two-dimensional Euler equations 
\begin{subequations}\label{eq:S1}
\begin{equation}\label{eq:1}
\left\{
\begin{array}{rllllll}
 \h (u_x+ v_y)+\h_x(u-c)+\h_y v&=&0&\text{in}\ \0_\eta,\\
\h (u-c) u_x+\h vu_y&=&-P_x&\text{in}\ \0_\eta,\\
\h (u-c) v_x+\h vv_y&=&-P_y-g\h&\text{in}\ \0_\eta.\end{array}
\right.
\end{equation}
The appropriate boundary conditions for waves propagating on a fluid domain lying on an  impermeable flat bed for 
which surface tension plays a significant role are given by
\begin{equation}\label{eq:3}
\left\{
\begin{array}{lllllll}
v&=&(u-c)\eta'&\text{on}\ y=\eta(x),\\
P&=&P_0-\sigma\eta''/(1+\eta'^2)^{3/2}&\text{on} \ y=\eta(x),\\
v&=&0&\text{on}\ y=-1,
\end{array}
\right.
\end{equation}
with the constant $P_0$ being the atmospheric pressure and $\sigma$  the coefficient of surface tension. 
The first kinematic surface condition in  \eqref{eq:3} implies a non-mixing condition, namely that the wave surface consists of the same fluid particles 
for all times, while the last kinematic boundary condition in  \eqref{eq:3} ensures that the bottom of the ocean is impermeable.
 The contribution of the surface tension is felt on the free surface where it exerts a force which is proportional 
 to the curvature of the surface at any given point, the constant of proportionality being the coefficient of surface tension \cite{John,Light}.

Letting $V=(u-c,v)$ denote the velocity field in the moving frame of reference, the steady continuity equation simplifies to 
\begin{equation}\label{eq:2}
{\rm div} V=0\quad \text{in}\ \0_\eta
\end{equation} when we make the supposition that 
\begin{equation}\label{nondiff} V\cdot \nabla \h=0,
\end{equation}\end{subequations} a condition which ensures that the density $\h$ is non-diffusive \cite{Long,Yih,Turn}.
 This assumption is very important in our analysis, since it means that  $\nabla\cdot (\sqrt{\h} (u-c), \sqrt{\h} v)=0,$ and this relation enables us to define the associated  pseudo-streamfunction $\psi$ by
\begin{equation}\label{strmdef}
\text{$\p_x\psi=-\sqrt{\h} v \ $ and $ \ \p_y\psi=\sqrt{\h} (u-c)$\qquad  in $\ov\0_\eta$.}
\end{equation} 
The level sets of this function are indeed the streamlines of the steady flow: for suppose the path of a fluid particle in the steady flow 
is parametrised by the curve $(x(s),y(s))$, where $x'(s)=u(x,y)-c,y'(s)=v(x,y)$ then 
\[
\psi'(x(s),y(s))=\psi_x x'(s)+\psi_y y'(s)=0.
\] Therefore the function $\psi$ is constant along each streamline.
We can see directly that $\psi$ is defined for $\h>0$  by the expression
\[
\psi(x,y):=\lambda+\int_{-1}^{y}\sqrt{\h}(x,s)(u(x,s)-c)\, ds, \qquad (x,y)\in\ov\0_\eta.
\]
Since $\psi$ is constant on the free surface we have the freedom to choose $\lambda$ such that $\psi=0$ on $y=\eta(x).$ 
We then have 
\[
\lambda=\int_{-1}^{\eta(x)}\sqrt{\h}(x,s)(c-u(x,s))\, ds, 
\] where $\lambda$ is a constant called the \emph{mass flux}.
We note that a further assumption which is often invoked is that there  
are no stagnation points within the fluid domain, a condition we can express as   
\begin{equation}\label{eq:CD}
 \p_y\psi=\sqrt{\h}(u-c)<0.
\end{equation} This condition is known to be physically plausible for waves that are not near breaking (see for instance the discussions in \cite{CE3,John,Light}). 
For fluid motions where \eqref{eq:CD} holds we observe that \eqref{eq:CD} implies that $\lambda>0$.   
Mathematically, condition \eqref{eq:CD} is very useful because it enables us   to reexpress the water wave problem in terms of new $(q,p)$ variables
which transform the unknown fluid domain $\0_\eta$ into a fixed rectangular domain $\0_\lambda:=\s\times(-\lambda,0)$. 
Furthermore, streamlines in $\0_\eta$ are mapped to straight horizontal lines in the new domain $\0_\lambda$, and we see 
below that this provides us with a convenient method of analysing how certain quantities change either along fixed streamlines or as we vary the streamlines.  
The mapping is defined by the Dubreil-Jacotin \cite{Dub1} semi-hodograph transformation 
\begin{equation}\label{eq:Hod}
(q,p):=H(x,y):=(x,-\psi(x,y)),\qquad (x,y)\in\0_\eta.
\end{equation}
If \eqref{eq:CD} holds true then the mapping $H:\0_\eta\to\0_\lambda$, is an isomorphism which allows us to reformulate 
the problem in terms of the pseudostream function. We note that the non-diffusive condition on the density $\h$, \eqref{nondiff}, implies that
the density function is constant on each streamline, since \[
\p_q\left(\h\circ H^{-1}\right)\circ H =\h_x+\h_y\frac{v}{u-c}=\frac{(u-c)\h_x+v\h_y}{u-c}=0.
\]
This means that we can formulate the density as a function of $p$, $\ov\rho=\ov\h(p)$ where we set
$\ov\rho(p)=\h\circ H^{-1}(q,p)$ for all $p\in[-\lambda,0]$, and we call this new version $\ov\h$ of the density function  the {\em  streamline density function}.
Furthermore, Bernoulli's theorem states that the quantity 
\begin{equation}\label{Ber}
E:=P+\h\frac{(u-c)^2+v^2}{2}+g\h y 
\end{equation} is constant along streamlines,
 that is  $\p_q\left( E\circ H^{-1}\right)=0$ for fixed $p$. 
Particularly, when $p=0,$ we obtain 
\[
\frac{|\nabla \psi|^2}{2}-\sigma\frac{\eta''}{(1+\eta'^2)^{3/2}}+g\ov\rho(0)y=Q\qquad\text{on $y=\eta(x)$,}
\]
for some constant $Q\in\R$, where $Q$ is known as the \emph{hydraulic head} of the flow.
Moreover, a direct calculation shows that 
\[
\Delta\psi-gy\ov\rho'\circ H=-\p_p(E\circ H^{-1})\circ H\qquad\text{in $\0_{\eta}$}.
\]
This is the Long-Yih equation for steady stratified water waves \cite{Long,Turn,Yih}, and we infer that 
since $\p_q\left( E\circ H^{-1}\right)=0$ there exists a function $\beta=\beta(p)$, 
called {\em Bernoulli's function}, such that $-\p_p(E\circ H^{-1})=\beta$ in $\0_\lambda.$

Summarising, $\psi$ solves the following problem 
\begin{subequations}\label{eq:S2}
\begin{eqnarray}
\label{s21} \Delta \psi&=&f(y,\psi)\qquad \text{in}\ \0_\eta, 
\\ \label{s22} \psi&=&0 \ \qquad \qquad \text{on}\ y=\eta(x),
\\ \label{s23} \psi&=&\lambda \ \qquad\qquad \text{on} \ y=-1,
\\ \label{s24} \frac{|\nabla \psi|^2}{2}-\sigma\frac{\eta''}{(1+\eta'^2)^{3/2}}+g\ov\rho(0)y&=&Q \qquad \qquad\text{on} \ y=\eta(x),
\end{eqnarray}
where for convenience we define $f:[-1,1]\times\R\to \R$ to be the function 
\begin{eqnarray*}
f(y,\psi)&:=&gy\ov\h'(-\psi)+\beta(-\psi),\qquad  (y,\psi)\in[-1,1]\times\R.
\end{eqnarray*} 
More explicitly, we have $f=\sqrt \h [u_y-v_x]-{\h'}\left[(u-c)^2+ v^2 \right]/2$, and in the case of a homogeneous fluid
(i.e. constant density $\ov\h=\h$) then $f(y,\psi)=\beta(-\psi)=\sqrt{\h}\gamma(-\psi),$ with $\gamma$ denoting the 
vorticity function of the homogeneous fluid, and we get the usual governing equations for capillary-gravity flows with vorticity, \cite{Wah2,Wa3}.
We also note, from the divergence structure of the curvature operator,  that, fixing the volume of fluid to be equal 
to that when the fluid interface is at rest $\eta=0$, cf. \eqref{VC}, the head $Q$ can be determined from \eqref{s24}:
\begin{equation}\label{s25}
Q=\int_{\s}\frac{|\nabla \psi|^2}{2}(x,\eta(x))\, dx,
\end{equation}
 \end{subequations}
if we normalise the integral such that $\int_\s1\, dx=1.$
We observe that if we are given functions $\psi,\ov\h(-\psi),\beta(-\psi)$ which solve the Long-Yih equation \eqref{s21},  then the corresponding functions $(u,v,P)$ we obtain from the relations \eqref{strmdef}, \eqref{Ber} respectively, will satisfy equations \eqref{eq:1} and \eqref{eq:2}, see \cite{Turn}. 
This is irrespective of whether the non-stagnation condition \eqref{eq:CD} holds, and therefore in the following we make no such assumption on the fluid flow when studying the system \eqref{eq:S2}.

\section{Laminar flow solutions to system \eqref{eq:S2}}\label{Lam}
The stratified capillary-gravity water wave problem \eqref{eq:S2} is an over-determined semilinear Dirichlet system, with an additional boundary condition 
to be satisfied on the wave surface, which determines the effect of surface tension, given by equations \eqref{s24} and \eqref{s25}. 
In Theorem \ref{T:1} we prove that the semilinear system \eqref{s21}--\eqref{s23} is well-posed. 
In particular,  {this Theorem  is used to establish  the existence} of laminar-flow solutions to the full system \eqref{eq:S2}. 
 We then introduce an operator formulation of system \eqref{eq:S2} in Section~\ref{Nonlam} which allows us to address the question concerning whether
 non-trivial (that is, non-laminar) solutions exist which also satisfy the additional boundary conditions \eqref{s24}-\eqref{s25}.
 We do this by means of the Crandall-Rabinowitz local bifurcation theorem, which owing to its importance in subsequent developments we now state:  
\begin{thm}[\cite{CR}]\label{CR}
Let $X,Y$ be Banach spaces and let $\Mf\in C^k(\mathbb R \times X,Y)$ with $k\geq2$ satisfy:
\begin{enumerate}
\item[(a)] \label{cr1} $\Mf(\gamma,0)=0$ for all $\gamma\in\mathbb R$;
\item[(b)] \label{cr2} The Fr\'echet derivative $\p_x\Mf(\gamma^*,0)$ is a Fredholm operator of index zero with a one-dimensional kernel: \[
\Kern (\p_x\Mf(\gamma^*,0))=\{sx_0: s\in\mathbb R, 0\neq x_0\in X\};\]
\item[(c)] \label{cr3}
 The tranversality condition holds: 
\[
\p_{\gamma x}\Mf(\gamma^*,0)[(1,x_0)]\not \in \im(\p_x\Mf(\gamma^*,0)).
\]
\end{enumerate}
Then $\gamma^*$ is a bifurcation point in the sense that there exists $\epsilon_0>0$ and a branch of solutions 
\[
(\gamma,x)=\{(\Gamma(s),s\chi(s)): s\in \mathbb R, |s|<\epsilon_0\} \subset \mathbb R \times X,
\]
with $\Mf(\gamma,x)=0$, $\Gamma(0)=\gamma^*,\chi(0)=x_0$, and the maps 
\[
s\mapsto \Gamma(s)\in \mathbb R,\qquad  s\mapsto s\chi(s)\in X,
\] are of class $C^{k-1}$ on $(-\epsilon_0,\epsilon_0)$.
Furthermore there exists an open set $U_0\subset \R\times X$ with $(\gamma^*,0)\in U_0$ and 
\[
\{
(\gamma,x)\in U_0:\Mf(\gamma,x)=0, x\neq 0\}=\{(\Gamma(s),s\chi(s)): 0<|s|<\epsilon_0
\}.
\]
\end{thm}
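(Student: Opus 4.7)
The plan is to prove this via the classical Lyapunov--Schmidt reduction, combined with the implicit function theorem (IFT) applied to a desingularised version of the equation $\Mf=0$. The Fredholm hypothesis gives closed decompositions $X=\R x_0\oplus X_1$ and $Y=Y_0\oplus Z$, where $Y_0:=\im(\p_x\Mf(\gamma^*,0))$ has codimension one and $Z$ is one-dimensional. By the open mapping theorem, the restriction $\p_x\Mf(\gamma^*,0)|_{X_1}\colon X_1\to Y_0$ is then a topological isomorphism.

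The decisive step is to factor the trivial zero $x=0$ out of the problem. Looking for solutions of the form $x=s(x_0+z)$ with $z\in X_1$, define
\[
\Ma(s,\gamma,z):=\int_0^1\p_x\Mf\bigl(\gamma,\,ts(x_0+z)\bigr)[x_0+z]\,dt,
\]
which is of class $C^{k-1}$ on a neighbourhood of $(0,\gamma^*,0)$ in $\R\times\R\times X_1$. By hypothesis (a) and the fundamental theorem of calculus, $s\,\Ma(s,\gamma,z)=\Mf(\gamma,s(x_0+z))$, so for $s\neq 0$ the equation $\Ma=0$ is equivalent to $\Mf=0$. Moreover $\Ma(0,\gamma^*,0)=\p_x\Mf(\gamma^*,0)[x_0]=0$ since $x_0\in\Kern(\p_x\Mf(\gamma^*,0))$.

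The crux is then to verify that $D_{(\gamma,z)}\Ma(0,\gamma^*,0)\colon\R\times X_1\to Y$, which acts as $(\dot\gamma,\dot z)\mapsto \dot\gamma\,\p_{\gamma x}\Mf(\gamma^*,0)[x_0]+\p_x\Mf(\gamma^*,0)[\dot z]$, is a linear isomorphism. This is exactly where the transversality assumption (c) enters: writing $y=y_0+z_0\in Y_0\oplus Z$, the $Z$-component of $\p_{\gamma x}\Mf(\gamma^*,0)[x_0]$ is non-zero by (c), so one first picks $\dot\gamma$ to match the $Z$-component of $y$ and then inverts the $X_1$-isomorphism to recover $\dot z$; injectivity is the same reasoning in reverse. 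The IFT now supplies $C^{k-1}$ maps $\Gamma,Z$ on a small interval $(-\epsilon_0,\epsilon_0)$ with $\Gamma(0)=\gamma^*$, $Z(0)=0$, and $\Ma(s,\Gamma(s),Z(s))\equiv 0$; setting $\chi(s):=x_0+Z(s)$ gives the branch claimed in the statement.

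The only genuinely delicate point is the local uniqueness clause on $U_0$. After possibly shrinking $U_0$, any nontrivial zero $(\gamma,x)\in U_0$ of $\Mf$ can be decomposed uniquely as $x=s(x_0+z)$ with $s\in\R$ small, $s\neq 0$, and $z\in X_1$ small; the corresponding triple $(s,\gamma,z)$ is then a zero of $\Ma$ near $(0,\gamma^*,0)$ and therefore, by the uniqueness part of the IFT, must agree with $(s,\Gamma(s),Z(s))$. The main obstacle to be navigated carefully is not any single step but the bookkeeping around the regularity of $\Ma$ across $s=0$: one loses exactly one order of smoothness in the integrand representation, which explains why the final branch is $C^{k-1}$ rather than $C^k$.
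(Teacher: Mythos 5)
This theorem is quoted verbatim from Crandall--Rabinowitz \cite{CR} and is not proved in the paper, so there is no internal argument to compare against. Your proposal reproduces, in essence, the original proof: the desingularisation $\Mf(\gamma, s(x_0+z)) = s\,\Ma(s,\gamma,z)$ obtained from (a) and the fundamental theorem of calculus, followed by the implicit function theorem applied to $\Ma$ at $(0,\gamma^*,0)$. The computation of $D_{(\gamma,z)}\Ma(0,\gamma^*,0)$, the verification that it is an isomorphism from $\R\times X_1$ onto $Y$ via Fredholmness and the transversality condition, and the $C^{k-1}$ regularity bookkeeping (one order is lost in passing from $\Mf$ to the integral representation of $\Ma$) are all sound.

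There is, however, a genuine gap in the local uniqueness step. You assert that, after shrinking $U_0$, every nontrivial zero $(\gamma,x)\in U_0$ of $\Mf$ may be written $x=s(x_0+z)$ with both $s$ and $z$ small, and then invoke the IFT uniqueness for $\Ma$. But the smallness of $z$ is precisely what must be proved. Decomposing $x = a\,x_0 + x_1$ uniquely with $x_1\in X_1$, it is not automatic that $a\neq 0$, nor that $\|x_1\|/|a|\to 0$ as $(\gamma,x)\to(\gamma^*,0)$: a priori, $x$ could sit almost entirely in $X_1$, in which case $z=x_1/a$ is not small and the triple $(a,\gamma,z)$ lies outside the neighbourhood where IFT uniqueness for $\Ma$ is available. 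One must use the equation $\Mf(\gamma,x)=0$ itself to preclude this. For instance, writing $A:=\int_0^1\p_x\Mf(\gamma,tx)\,dt$ so that $Ax=0$, the restriction $\p_x\Mf(\gamma^*,0)|_{X_1}$ is injective with closed range, hence $A|_{X_1}$ is bounded below for $(\gamma,x)$ close to $(\gamma^*,0)$; from $A(a\,x_0)=-Ax_1$ one obtains $\|x_1\|\lesssim |a|\,\|Ax_0\|$, and since $Ax_0\to\p_x\Mf(\gamma^*,0)[x_0]=0$, indeed $\|x_1\|/|a|\to 0$. Alternatively one carries out the full two-step Lyapunov--Schmidt reduction, solving the $Y_0$-projected equation for $x_1=z(\gamma,a)$ first, which shows $z(\gamma,0)=0$ and hence $\|x_1\|=O(|a|)$. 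Without one of these arguments the uniqueness clause of the theorem is merely asserted, not established.
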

Throughout the remainder of this paper we shall assume that the following conditions hold:
\begin{align*}
&\text{(A1)}\quad  \ov \h>0;\\
&\text{(A2)}\quad \ov \h\in C^{4-}(\R),    \beta\in C^{3-}(\R), \ \text{and} \   \beta, \ov \h'\in\mbox{\it BC}\,^2(\R);\\
&\text{(A3)}\quad \p_\psi f(y,\psi)\geq0 \ \text{for all} \ (y,\psi)\in[-1,1]\times\R.
\end{align*}
 Here, $\mbox{\it BC}\,^2(\R)$ is the subspace of $C^2(\R)$ which contains only functions with bounded derivatives up to order $2$, while, given $m\in\N $ with $m\geq1,$  
 \[
C^{m-}(\R):=\{h\in C^{m-1}(\R)\,:\, h^{(m-1)} \ \text{ is  Lipschitz continuous}\}.  
\]
\begin{rem}
Though it is possible to consider fluids having 
streamline density function which is not monotone, the assumption (A3) restricts our 
considerations to fluids having a non-increasing Bernoulli function. Furthermore, we mention that for the
particular case of fluids with constant density then condition (A3) is equivalent to the  condition $\gamma'(-\psi){ \leq} 0$ 
for the vorticity function. In this context, we note that this type of condition appears in studying (linear) stability properties of homogeneous
flows with a free surface, cf. the discussion in \cite{CS2}.
\end{rem}

  Let thus $\alpha\in(0,1)$ be fixed for the remainder of this paper, and given $m\in\N $ and $\eta>-1,$ 
we let $C^{m+\alpha}_{per}(\ov\0_\eta)$ denote the subspace of $C^{m+\alpha}(\ov\0_\eta)$ consisting only of functions which are $2\pi-$periodic in the $x-$variable.
\begin{thm}\label{T:1}
Assume that {\rm (A1)-(A3)}   are satisfied.
Given $\eta\in C^{2+\alpha}(\s)$  with $|\eta|<1$  and $\lambda\in\R,$ the semilinear Dirichlet problem 
\begin{equation}\label{eq:DP}
\left\{
\begin{array}{rllll}
\Delta \psi&=& f(y,\psi)&\text{in}\ \0_\eta,\\
\psi&=&0&\text{on}\ y=\eta(x),\\
\psi&=&\lambda&\text{on} \ y=-1
\end{array}
\right.
\end{equation}
possesses a unique solution $\psi\in C^{2+\alpha}_{per}(\ov\0_\eta).$
\end{thm}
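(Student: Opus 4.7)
The plan is to exploit the monotonicity assumption (A3) by formulating the problem variationally, which will simultaneously yield existence and uniqueness of a weak solution; the $C^{2+\alpha}$ regularity is then recovered by standard elliptic bootstrapping. Before tackling existence, uniqueness is essentially free: if $\psi_1,\psi_2$ are two solutions, then $w:=\psi_1-\psi_2$ vanishes on $\partial\0_\eta$ and, writing
\[
f(y,\psi_1)-f(y,\psi_2)=c(x,y)\,w,\qquad c(x,y):=\int_0^1\p_\psi f\bigl(y,s\psi_1+(1-s)\psi_2\bigr)\,ds,
\]
we see that $c\geq0$ by (A3), so $\Delta w-cw=0$ in $\0_\eta$ with zero Dirichlet data; multiplying by $w$ and integrating by parts gives $w\equiv0$.

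For existence I would first homogenize the boundary data using the explicit lift
\[
\psi_0(x,y):=\lambda\,\frac{\eta(x)-y}{\eta(x)+1},
\]
which lies in $C^{2+\alpha}_{per}(\ov\0_\eta)$ since $\eta\in C^{2+\alpha}(\s)$ with $\eta>-1$. Setting $w:=\psi-\psi_0$, the problem becomes finding $w\in H^1_0(\0_\eta)$ with $\Delta w = f(y,w+\psi_0)-\Delta\psi_0$ in $\0_\eta$. Because (A2) forces $\beta$ and $\ov\h'$ to be bounded, the nonlinearity $f$ is uniformly bounded on $[-1,1]\times\R$ and globally Lipschitz in $\psi$; in particular $F(y,s):=\int_0^s f(y,\tau)\,d\tau$ grows at most linearly. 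Consider the functional
\[
J(w):=\int_{\0_\eta}\Bigl[\tfrac12|\nabla w|^2+F(y,w+\psi_0)-w\,\Delta\psi_0\Bigr]\,dx\,dy
\]
on $H^1_0(\0_\eta)$. By (A3) we have $\p_s^2 F=\p_\psi f\geq 0$, so $F(y,\cdot)$ is convex, and the quadratic Dirichlet energy makes $J$ strictly convex, weakly lower semicontinuous, and coercive (using Poincar\'e). The direct method therefore produces a unique minimizer, whose Euler-Lagrange equation is exactly the weak formulation of the homogenized Dirichlet problem.

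The last step is to upgrade $w$ from $H^1_0$ to $C^{2+\alpha}_{per}(\ov\0_\eta)$. Boundedness of the right-hand side $f(y,w+\psi_0)-\Delta\psi_0$ in $L^\infty$ yields $w\in W^{2,p}(\0_\eta)$ for every $p<\infty$ by Calder\'on--Zygmund theory, and then $w\in C^{1+\alpha}(\ov\0_\eta)$ by Sobolev embedding. Since $f(y,\cdot)$ is $C^{1-}$ and preserves H\"older classes by (A2), the right-hand side belongs to $C^\alpha$, and Schauder estimates applied to the linear Poisson equation satisfied by $w$ deliver $w\in C^{2+\alpha}_{per}(\ov\0_\eta)$. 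Periodicity in $x$ is inherited because the whole construction takes place in spaces of $2\pi$-periodic functions. The main technical point to be careful with is the Schauder/$L^p$ theory up to the curved upper boundary $y=\eta(x)$, but this is covered by the classical theory as soon as the boundary is $C^{2+\alpha}$, which is precisely the regularity assumed on $\eta$; the flat bottom $y=-1$ poses no difficulty at all.
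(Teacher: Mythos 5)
Your proof is correct, but it takes a genuinely different route from the paper. The paper proves existence via a Leray--Schauder fixed-point argument: it defines a solution operator $T$ for the linearized Dirichlet problem, shows it is completely continuous, and then derives the required a priori bound for the family $\psi=\kappa T\psi$ by combining a supremum estimate (Theorem 3.7 in Gilbarg--Trudinger, valid because $f$ is bounded) with the De Giorgi--Nash interior/boundary H\"older estimates (Theorem 8.29 in Gilbarg--Trudinger); uniqueness is obtained separately via the weak maximum principle. You instead homogenize the boundary data and minimize the convex functional $J$ on $H^1_0(\0_\eta)$; strict convexity (from (A3)) plus coercivity (from the boundedness of $f$ in (A2) and Poincar\'e) gives a unique minimizer by the direct method, and elliptic bootstrapping ($L^p\to W^{2,p}\to C^{1+\alpha}\to C^{2+\alpha}$ via Schauder) recovers the classical regularity. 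What your approach buys is that existence and uniqueness come from the same structural fact -- convexity of $J$ -- and you avoid having to quote the De Giorgi--Nash theorem or the Leray--Schauder alternative. What the paper's approach buys is that it does not rely on a variational structure at all, so it would survive even if (A3) were weakened to a condition not expressible as convexity of an energy; it also directly produces a solution in $C^{\tilde\alpha}$ without passing through Sobolev spaces. Your uniqueness argument via multiplying $\Delta w - cw = 0$ by $w$ and integrating by parts is an energy version of the same monotonicity fact the paper uses through the maximum principle, and is equally valid. One small remark on your bootstrap: when you invoke Schauder for the final step you implicitly use that $f$ is (locally) Lipschitz in $\psi$, which (A2) guarantees since $\beta,\ov\h'\in BC^2(\R)$; the phrase ``$f(y,\cdot)$ is $C^{1-}$'' slightly understates the available regularity but is sufficient.
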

We note that when $\eta=0$ and $\0:=\0_0$ then, for any $\lambda\in\R,$ Theorem \ref{T:1}
 ensures the existence and uniqueness of a solution $\psi_\lambda\in C^{2+\alpha}_{per}(\ov\0)$ of  problem \eqref{eq:DP}, 
which in this case depends only upon $y$.
 Therefore, the pair $(\eta,\psi):=(0,\psi_\lambda)$ is a solution of \eqref{eq:S2} for all $\lambda\in\R$.
Moreover, we obtain that the function $\psi_\lambda$ verifies the following identity
\begin{equation}\label{eq:PL}
\psi_\lambda(y)=-\lambda y-y\int_{-1}^0\int_{-1}^tf(s,\psi_\lambda(s))\, ds\, dt-\int_{y}^0\int_{-1}^tf(s,\psi_\lambda(s))\, ds\, dt,\qquad y\in[-1,0]. 
\end{equation}
Particularly, by differentiation we have
\begin{equation}\label{eq:PLd}
\psi_\lambda'(y)=-\lambda -\int_{-1}^0\int_{-1}^tf(s,\psi_\lambda(s))\, ds\, dt+\int_{-1}^yf(s,\psi_\lambda(s))\, ds,\qquad y\in[-1,0].  
\end{equation}

\begin{proof}[Proof of Theorem \ref{T:1}] We prove the uniqueness first. 
If $\psi_1,\psi_2\in C^{2+\alpha}_{per}(\ov\0_\eta)$ are two solutions of \eqref{eq:DP}, then $\psi:=\psi_1-\psi_2$ is a solution of the equation
\[
\Delta \psi=f(y,\psi_1)-f(y,\psi_2)\qquad\text{in} \ \0_\eta
\] 
satisfying $\psi=0$ on $\p\0_\eta.$ 
Using the mean value theorem, we have:
\[
\text{$f(y,\psi_1)-f(y,\psi_2)=c(x,y)\psi\quad $ with $\quad c(x,y):=\int_0^1\p_\psi f(y,\psi_2+t(\psi_1-\psi_2))\, dt$}.
\]
Invoking (A3), the weak elliptic maximum principle yields the desired uniqueness result.

In order to prove the existence part, we use a fixed point argument.
Given $\varphi\in C^\alpha_{per}(\ov\0_\eta),$ we denote by $T\varphi\in C^{2+\alpha}_{per}(\ov\0_\eta)$ the unique solution of the linear Dirichlet problem
\begin{equation}\label{eq:DPv}
\left\{
\begin{array}{rllll}
\Delta \psi&=&f(y,\varphi)&\text{in}\ \0_\eta,\\
\psi&=&0&\text{on}\ y=\eta(x),\\
\psi&=&\lambda&\text{on} \ y=-1.
\end{array}
\right.
\end{equation}
By (A2), the operator $T$ is well-defined and continuous.
Moreover,   $C^{2+\alpha}_{per}(\ov\0_\eta)\hookrightarrow C^\alpha_{per}(\ov\0_\eta)$ is a compact embedding,
and  $T: C^\alpha_{per}(\ov\0_\eta)\to C^\alpha_{per}(\ov\0_\eta)$ is therefore completely continuous.
Since any solution of \eqref{eq:DP} is a fixed point of $T$, Leray-Schauder's theorem \cite[Theorem 10.3]{GT} yields the following result.
\begin{itemize}
\item[] \it If there exists  a  constant $M>0$ and $\tilde\alpha\in (0,1)$ such that 
\begin{equation}\label{eq:CE}
\|\psi\|_{C^{\tilde\alpha}_{per}(\ov\0_\eta)}\leq M
\end{equation} 
for all $\psi\in C^{2+\alpha}_{per}(\ov\0_\eta)$ and $\kappa\in[0,1]$ satisfying $\psi=\kappa T\psi,$
then $T$ possesses a fixed point. 
\end{itemize}
That $\psi$ is a solution of $\psi=\kappa T\psi,$ may be re-expressed by saying that $\psi$ is a solution of the Dirichlet problem:
\begin{equation}\label{eq:DPp}
\left\{
\begin{array}{rllll}
\Delta \psi&=&\kappa f(y,\psi)&\text{in}\ \0_\eta,\\
\psi&=&0&\text{on}\ y=\eta(x),\\
\psi&=&\kappa\lambda&\text{on} \ y=-1.
\end{array}
\right.
\end{equation}
We prove now \eqref{eq:CE}.
 First, we  bound the supremum bound of $\psi$ independently of $\kappa.$
Indeed,  using apriori bounds for inhomogeneous Dirichlet problems, cf. \cite[Theorem 3.7]{GT},
 we obtain, by taking into account that $\0_\eta\subset [|y|<1],$ the following estimate 
\begin{equation}\label{eq:C1}
\|\psi\|_{C_{per}(\ov\0_\eta)}\leq \sup_{\p\0_\eta} |\psi|+e^{2}\sup_{\0_\eta}|\kappa f(y,\psi)|\leq C,
\end{equation}
where $C:=|\lambda|+e^{2}\left(g\|\ov\h'\|_{BC(\R)}+\|\beta\|_{BC(\R)}\right)$.
We may regard now the right-hand side of the first equation in \eqref{eq:DPp} as a function in $L_\infty(\0_\eta).$ 
Then, taking into account that the Dirichlet data in \eqref{eq:DPp} are constant, we infer from the de Giorgi and Nash estimates, cf. \cite[Theorem 8.29]{GT},
that  there exists $\tilde\alpha\in(0,1)$ such that equation \eqref{eq:CE} is satisfied.
\end{proof}

\section{Non-laminar solutions}\label{Nonlam}
\subsection{The functional analytic setting} 
The next aim is to  show that there exist non-trivial (that is, non-laminar) solutions of the stratified water wave problem \eqref{eq:S2}, and in order 
to achieve this we must recast the water wave problem \eqref{eq:S2} using an operator formulation. We will then apply the Crandall-Rabinowitz theorem 
to prove the local existence of solutions bifurcating from the laminar flows. We can show the local existence of non-laminar solutions, firstly fixing 
the mass-flux constant $\lambda$ and then using the coefficient of surface tension $\sigma$ as a bifurcating parameter, and then secondly fixing $\sigma$ 
and using $\lambda$ as a bifurcating parameter. 

To this end, we introduce the subspaces $\wh C_{e,k}^{m+\alpha}(\s),$ $m\in\N$,  of $C^{m+\alpha}(\s)$ consisting  of even functions which are 
$2\pi/k-$periodic { and have integral mean zero}, and 
 we define $C_{e,k}^{m+\alpha}(\ov\0),$  as the subspace of $C^{m+\alpha}_{per}(\ov\0)$ containing only  even and $2\pi/k-$periodic functions  
 in the $x$ variable.
The evenness condition imposes a symmetry on the free-surface (and on the underlying flow). 
In the absence of stratification and of stagnation points (but allowing for vorticity), one can show that the mere monotonicity of the
free-surface between trough and crest ensures this symmetry, cf. \cite{CEW,CE2}.
To some extent, this result also applies to stratified flows, cf. \cite{Wa2}.
By  considering only functions $\eta $ with integral mean zero we incorporate the volume constraint \eqref{VC} in the spaces we work with.

The bottom of the fluid being located at $y=-1,$  we take the wave profile $\eta$ from the set
\[
\Ad:=\{\eta\in \wh C_{e,k}^{2+\alpha}(\s)\,:\, |\eta|<1\}.
\]
Given $\eta\in\Ad,$ we define the mapping $\Phi_\eta:\0=\s\times(-1,0)\to \0_\eta$
by the relation
\[
\Phi_\eta(x,y):=(x,(y+1)\eta(x)+y), \qquad (x,y)\in\0.
\] 
This mapping has the benefit of flattening the unknown free-boundary fluid domain into a fixed rectangle, similar to \cite{EMM1,Wah3}.
The operator $\Phi_\eta$ is a diffeomorphism for all $\eta\in\Ad$,   and we use this property to transform problem \eqref{eq:S2} onto the rectangle $\0.$
Therefore, corresponding to the semilinear elliptic operator in \eqref{s21} we introduce the transformed
elliptic operator  $\A:\Ad\times C_{e,k}^{2+\alpha}(\ov\0)\to C_{e,k}^{\alpha}(\ov\0)$  with
\[
\A(\eta,\tilde\psi):=\Delta(\tilde\psi\circ\Phi_\eta^{-1})\circ\Phi_\eta-f(y,\tilde\psi\circ\Phi_\eta^{-1})\circ\Phi_\eta,
\qquad (\eta,\tilde\psi)\in\Ad\times C_{e,k}^{2+\alpha}(\ov\0).
\]
To ease  notation we decompose $\A(\eta,\tilde\psi)=\A_0(\eta)\tilde\psi+b(\eta,\tilde\psi),$ where
\begin{equation}\label{eq:A0}
\begin{aligned}
\A_0(\eta)&:=\p_{11}-\frac{2(1+y)\eta'}{1+\eta}\p_{12}+\frac{1+(1+y)^2\eta'^2}{(1+\eta)^2}\p_{22}-(1+y)\frac{(1+\eta)\eta''-2\eta'^2}{(1+\eta)^2}\p_2,\\
b(\eta,\tilde\psi)&:=-f((1+y)\eta+y,\tilde \psi).
\end{aligned}
\end{equation}
Furthermore, corresponding to the equation \eqref{s24}, we introduce the  boundary operator $\B:\Ad\times C_{e,k}^{2+\alpha}(\ov\0)\to C_{e,k}^{\alpha}(\s)$ by the relation
\[
\B(\eta,\tilde\psi):=\frac{\tr |\nabla(\tilde\psi\circ\Phi_\eta^{-1})|^2\circ\Phi_\eta}{2}=\frac{1}{2}\left(\tr \tilde\psi_1^2-\frac{2\eta'}{1+\eta}\tr \tilde\psi_1\tr \tilde\psi_2+\frac{1+\eta'^2}{(1+\eta)^2}\tr \tilde\psi_2^2\right),
\]
with $\tr $ denoting the trace operator with respect to $\s=\s\times\{0\}$.

Having introduced  this notation, we remark that the laminar flow solutions  
 $(\eta,\psi)=(0,\psi_\lambda), $ $\lambda\in\R,$  correspond to the trivial solutions $(\sigma, \lambda, \eta)=(\sigma, \lambda,0) $ 
 of the  equation
\begin{equation}\label{eq:Psi}
\Psi(\sigma,\lambda,\eta)=0
\end{equation}
 where $\Psi:(0,\infty)\times \R\times\Ad\to \wh C_{e,k}^{\alpha}(\s)$ is given by
 \[
\Psi(\sigma,\lambda,\eta):=\B(\eta, \T(\lambda,\eta))-\sigma\frac{\eta''}{(1+\eta'^2)^{3/2}}+g\ov\h(0)\eta-\int_{\s}\B(\eta, \T(\lambda,\eta))\, dx,
\]
and $\T:\R\times\Ad\to C_{e,k}^{2+\alpha}(\ov\0)$ is the solution operator to the semilinear Dirichlet problem
\begin{equation}\label{eq:DPp-}
\left\{
\begin{array}{rllll}
\A(\eta,\tilde\psi)&=&0&\text{in}&\0,\\
\tilde\psi&=&0&\text{on}&y=0,\\
\tilde\psi&=&\lambda&\text{on} &y=-1.
\end{array}
\right.
\end{equation}
That $\T$ is well-defined between these spaces follows easily from the weak elliptic maximum principle, as in the uniqueness proof of Theorem \ref{T:1}.
Moreover, problems \eqref{eq:S2} and \eqref{eq:Psi} are  equivalent, { that is  $(\lambda,\sigma,\eta)$ is a solution of \eqref{eq:Psi} if and only if $(\eta,\psi:=\T(\lambda,\eta)\circ\Phi_\eta^{-1})$
is a solution of \eqref{eq:S2}.}
Following ideas from \cite[Theorem 2.3]{EMM2}, we establish now a preliminary result concerning the regularity of $\T.$

 \begin{lemma}\label{L:1} We have  $\T\in C^2(\R\times \Ad, C_{e,k}^{2+\alpha}(\ov\0)).$
 \end{lemma}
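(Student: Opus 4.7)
The plan is to apply the implicit function theorem after homogenizing the boundary data. Substituting $\tilde\psi=-\lambda y+\phi$ converts the Dirichlet data in \eqref{eq:DPp-} into homogeneous ones $\phi|_{y=0}=\phi|_{y=-1}=0$. With
\begin{equation*}
X_0:=\{\phi\in C^{2+\alpha}_{e,k}(\ov\0):\phi|_{y=0}=\phi|_{y=-1}=0\},
\end{equation*}
I would introduce
\begin{equation*}
\Mf:\R\times\Ad\times X_0\to C^{\alpha}_{e,k}(\ov\0),\qquad \Mf(\lambda,\eta,\phi):=\A(\eta,-\lambda y+\phi).
\end{equation*}
Theorem~\ref{T:1} (transported through $\Phi_\eta$) guarantees that for each $(\lambda,\eta)$ there is a unique $\phi_*(\lambda,\eta)\in X_0$ solving $\Mf(\lambda,\eta,\phi)=0$, and $\T(\lambda,\eta)=-\lambda y+\phi_*(\lambda,\eta)$; it thus suffices to prove $\phi_*\in C^2$.

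First I would verify that $\Mf$ is of class $C^2$. Using the decomposition $\A(\eta,\tilde\psi)=\A_0(\eta)\tilde\psi+b(\eta,\tilde\psi)$ from \eqref{eq:A0}, the coefficients of $\A_0(\eta)$ are rational in $\eta,\eta',\eta''$ and in $(1+\eta)^{-1}$ (well-defined since $|\eta|<1$), so $(\eta,\tilde\psi)\mapsto \A_0(\eta)\tilde\psi$ is $C^\infty$ into $C^\alpha_{e,k}(\ov\0)$. For the Nemytskii-type term $b(\eta,\tilde\psi)=-f((1+y)\eta+y,\tilde\psi)$, the assumption (A2) -- in particular $\ov\h'\in BC^2(\R)$ and $\beta\in BC^2(\R)\cap C^{3-}(\R)$ -- ensures that $f$ is $C^2$ in both arguments with bounded, locally Lipschitz second derivatives, which is exactly what is needed for $C^2$ regularity of the corresponding superposition operator between the H\"older spaces at hand.

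The key step is to show that
\begin{equation*}
D_\phi\Mf(\lambda,\eta,\phi_0)\phi=\A_0(\eta)\phi+\p_\psi b(\eta,-\lambda y+\phi_0)\phi
\end{equation*}
is a topological isomorphism from $X_0$ onto $C^\alpha_{e,k}(\ov\0)$. The operator $\A_0(\eta)$ is uniformly elliptic on $\ov\0$, being the pullback of the Laplacian under the diffeomorphism $\Phi_\eta$, and its zeroth-order coefficient $\p_\psi b=-\p_\psi f$ is non-positive by (A3); injectivity therefore follows from the weak maximum principle, exactly as in the uniqueness part of Theorem~\ref{T:1}. Surjectivity is a consequence of standard Schauder theory for the linear Dirichlet problem on the rectangle $\0$, with $2\pi/k$-periodicity in $x$ serving as a lateral boundary condition; uniqueness, combined with the invariance of the coefficients and data under $x\mapsto -x$ and $x\mapsto x+2\pi/k$, forces the solution to lie in the symmetric subspace $X_0$.

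Once these properties are established, the implicit function theorem yields $\phi_*\in C^2(\R\times\Ad,X_0)$, and hence $\T\in C^2(\R\times\Ad,C^{2+\alpha}_{e,k}(\ov\0))$. The main obstacle is the careful verification of the isomorphism step: it requires assembling the ellipticity of $\A_0(\eta)$, the sign condition (A3), and Schauder estimates on the rectangle with mixed Dirichlet/periodic data, while simultaneously ensuring that the symmetric subspace $X_0$ is preserved.
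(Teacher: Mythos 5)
Your proposal is correct and follows essentially the same strategy as the paper: apply the implicit function theorem, using (A2) for $C^2$ regularity of the nonlinear map and ellipticity together with (A3) and the weak maximum principle to show that the linearization in $\tilde\psi$ is an isomorphism. The only cosmetic difference is that you homogenize the Dirichlet data via $\tilde\psi=-\lambda y+\phi$ and work on the subspace $X_0$, whereas the paper keeps the traces $\tr_0\tilde\psi$ and $\tr_-\tilde\psi-\lambda$ as extra components of the map $L$ into $C^\alpha_{e,k}(\ov\0)\times(C^{2+\alpha}_{e,k}(\s))^2$; both are standard and equivalent bookkeeping choices.
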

\begin{proof}
Letting $L: \R\times\Ad\times C^{2+\alpha}_{e,k}(\ov\0)\to C^\alpha_{e,k}(\ov\0)\times (C^{2+\alpha}_{e,k}(\s))^2$ be the operator defined by 
\[ L(\lambda,\eta,\tilde\psi):=(\A(\eta,\tilde\psi), \tr_0 \tilde\psi,\tr_{-} \tilde\psi-\lambda),\]
where $\tr_- $ is the trace  with respect to $\s=\s\times\{-1\}$, the assumption (A2) guarantees  that 
$L$ is of class $C^2$.
Moreover, it holds that 
\[L(\lambda, \eta, \T(\lambda,\eta))=0\]
for all $(\lambda,\eta)\in\R\times \Ad.$
We show now that the operator  $\p_{\tilde\psi}L(\lambda, \eta, \T(\lambda,\eta))$ is an isomorphism, that is
$\p_{\tilde\psi}L(\lambda, \eta, \T(\lambda,\eta))\in\Isom(C^{2+\alpha}_{e,k}(\ov\0), C^\alpha_{e,k}(\ov\0)\times (C^{2+\alpha}_{e,k}(\s))^2),$
for all $(\lambda,\eta)\in\R\times\Ad.$
The implicit function theorem leads us then to the claim.
It is not difficult to see that for all $\tilde\psi, w\in C^{2+\alpha}_{e,k}(\ov\0),$ we have
\[
\p_{\tilde\psi}L(\lambda,\eta,\tilde\psi)[w]=(\A_0(\eta)w+\p_{\tilde\psi}b(\eta,\tilde\psi)w,\tr_0 w,\tr_-w), 
\]
where
\[
\p_{\tilde\psi}b(\eta,\tilde\psi)=-\p_{\psi}f((1+y)\eta+y,\tilde\psi)\leq0, 
\]
by (A3).  
Consequently, 
 the operator $\p_{\tilde\psi} L(\lambda,\eta,\tilde\psi)[w]$ satisfies the weak-maximum principle \cite{GT}, and using standard 
 arguments for elliptic operators on the existence and uniqueness of solutions it follows that $\p_{\tilde\psi} L(\lambda,\eta,\tilde\psi)[w]$ 
 is an isomorphism for all $(\lambda,\eta,\tilde\psi).$
This completes the proof.
\end{proof}

Since  $\B$ depends analytically on $(\eta,\tilde\psi)$, we conclude that  
\begin{equation}\label{eq:B0}
\Psi\in C^2((0,\infty)\times \R\times\Ad, \wh C_{e,k}^{\alpha}(\s)).
\end{equation}
This is the starting point to our bifurcation analysis.

\subsection{Caracterisation of $\p_\eta\Psi(\sigma,\lambda,0)$ as a Fourier multiplier}
In order to prove that local bifurcation occurs using the Crandall-Rabinowitz Theorem \ref{CR} we need to determine first the 
Fr\'echet derivative of $\Psi$ with respect to $\eta.$ In this section, we show that $\p_\eta\Psi(\sigma,\lambda,0)$ is in fact a 
Fourier multiplier, and this allows us to analyse its Fredholm properties in a rather neat fashion by examining the behaviour of its 
symbols $(\mu_m(\sigma,\lambda))$, which we determine below. This examination of the symbols will be the essence of the technical proofs 
of Theorems \ref{MT1}, \ref{MT2} in Section \ref{MainResults}. 
To this end we fix $(\sigma,\lambda)$ and, recalling that $\T(\lambda,0)=\psi_\lambda,$ we  obtain
\begin{align*}
\p_\eta\Psi(\sigma,\lambda,0)[\eta]=&\p_\eta\B(0,\psi_\lambda)[\eta]+\p_{\tilde\psi}\B(0,\psi_\lambda)[ \p_\eta \T(\lambda,0)[\eta]]-\sigma\eta''+g\ov \h(0)\eta\\
&-\int_\s\left(\p_\eta\B(0,\psi_\lambda)[\eta]+\p_{\tilde\psi}\B(0,\psi_\lambda)[ \p_\eta \T(\lambda,0)[\eta]]\right)\, dx
\end{align*}
for all $\eta \in \wh C^{2+\alpha}_{e,k}(\s)$.
Taking into account that $\psi_\lambda$ depends only upon $y$, we have the following relations:
\begin{align*}
\p_\eta\B(0,\psi_\lambda)[\eta]&=-\psi_\lambda'^2(0)\eta\quad\text{and}\quad\p_{\tilde\psi}\B(0,\psi_\lambda)[w]=\psi_\lambda'(0)\tr w_2.
\end{align*}
Moreover, differentiating  the equations of \eqref{eq:DPp-} with respect to $\eta,$ we see that $\p_\eta\T(\lambda,0)[\eta]$
is the solution of the Dirichlet problem:
\begin{equation}\label{eq:FD1}
\left\{
\begin{array}{rllll}
\A_0(0) w+\p_{\tilde\psi}b(0,\psi_\lambda)w&=&-\p_\eta\A_0(0)[\eta]\psi_\lambda-\p_\eta b(0,\psi_\lambda)[\eta]&\text{in}\ \0,\\[1ex]
w&=&0&\text{on}\ \p\0,
\end{array}
\right.
\end{equation}
whence
\begin{equation}\label{eq:FD}
\left\{
\begin{array}{rllll}
\Delta w-\p_\psi f(y,\psi_\lambda)w&=&(1+y)\psi_\lambda'\eta''+[2f(y,\psi_\lambda)+g(1+y)\ov\h'(-\psi_\lambda)]\eta&\text{in}\ \0,\\[1ex]
w&=&0&\text{on}\ \p\0.
\end{array}
\right.
\end{equation}
We consider now the Fourier series expansions of $\eta$ and $w$: 
\[
\eta=\sum_{m=1}^\infty a_m\cos(mkx),\qquad w:=\sum_{m=1}^\infty a_mw_m(y)\cos(mkx),
\] 
and find that   
  \[
 (1+y)\psi_\lambda'\eta''+[2f(y,\psi_\lambda)+g(1+y)\ov\h'(-\psi_\lambda)]\eta=\sum_{m=1}^\infty a_mb_m(y)\cos(mkx)
 \]
where
\[
b_m(y):=2f(y,\psi_\lambda(y))+g(1+y)\ov\h'(-\psi_\lambda(y))-(mk)^2(1+y)\psi_\lambda'(y).
\]
Inserting these expressions into \eqref{eq:BvD} and comparing the coefficients of $\cos(mkx) $ on both sides of the equations, we conclude that $w_m$
is the unique solution of the boundary value problem
\begin{equation}\label{eq:BvD}
\left\{
\begin{array}{rllll}
 w_m''-((mk)^2+\p_\psi f(y,\psi_\lambda))w_m&=&b_m,\ -1<y<0,\\[1ex]
w_m(0)=w_m(-1)&=&0.
\end{array}
\right.
\end{equation}
Though we can not solve \eqref{eq:BvD} explicitly, we will see later on that elliptic maximum principles may be use to study the properties of the solution of  \eqref{eq:BvD}.
Understanding the behaviour of the solutions of  \eqref{eq:PL}   and \eqref{eq:BvD} is the key point in our analysis.
   
Summarising, we find that the Fr\'echet derivative of $\p_\eta\Psi(\sigma,\lambda,0)$ is the Fourier multiplier
 \begin{equation}\label{eq:FDd}
\p_\eta\Psi(\sigma,\lambda,0)\sum_{m=1}^\infty a_m\cos(mkx)=\sum_{m=1}^\infty \mu_m(\sigma,\lambda) a_m\cos(mkx),
\end{equation}
 and the symbol $(\mu_m(\sigma,\lambda))_{m\geq1}$ is given by the expression
 \begin{equation}\label{eq:symb}
\mu_m(\sigma,\lambda):=\sigma(mk)^2+g\ov\h(0)-\psi'^2_\lambda(0)+\psi_\lambda'(0)w_m'(0).
\end{equation}
Clearly, by virtue of Lemma \ref{L:1}, $\mu_m$ is of class $C^2$ in $(\sigma,\lambda)$.
It is well-known that realization $\Delta:C^{2+\alpha}(\s)\to C^{\alpha}(\s)$  of the Laplace operator is the generator of an analytic and continuous semigroup in $\kL(C^{\alpha}(\s)).$
Moreover, $\Delta$ is a Fourier multiplier with symbol  $(-k^2)_{k\in\Z}$
and the operator $\sigma \Delta+\p_\eta\Psi(\sigma,\lambda,0)\in \kL(\wh C^{2+\alpha}_{e,k}(\s), \wh  C^{1+\alpha}_{e,k}(\s)),$ cf. Lemma \ref{L:1}.
This shows, in view of Proposition 2.4.1 (i) in \cite{L}, that $-\p_\eta\Psi(\sigma,\lambda,0)$ is itself the generator of an analytic strongly continuous semigroup in $\kL(\wh C_{e,k}^{\alpha}(\s)).$
Particularly, we infer from Theorem III.8.29 in \cite{K} that the spectrum of $\p_\eta\Psi(\sigma,\lambda,0)$ coincides with its point spectrum, that is
\begin{equation}
\bm{\sigma}(\p_\eta\Psi(\sigma,\lambda,0)):=\{\mu_m(\sigma,\lambda)\,:\, m\geq 1\}.
\end{equation}
 
\subsection{Main existence results}\label{MainResults}
We now prove our main existence results, first using the coefficient of surface tension $\sigma$ as a bifurcation parameter (the $\gamma$ in the Crandall-Rabinowitz Theorem \ref{CR}) and secondly using the mass-flux $\lambda$ as a bifurcation parameter. We make the following additional assumption on the system \eqref{eq:S2}:
\begin{equation*}
\text{(A4)}\quad  2f(y,\psi)+g(1+y)\ov\h'(-\psi)\leq 0 \ \text{for all} \ y\in[-1,0]\times(-\infty,0]. 
\end{equation*}
\begin{rem}
We note that for homogeneous fluids, $\h$ constant, condition (A4) states that the vorticity function is negative, $\gamma(-\psi)\leq0$. For an analysis of the bearing that such a condition has on the underlying flows cf. the discussions in \cite{CS3,Var}. 
\end{rem}
\begin{thm}\label{MT1} Let $\ov \h$ and $\beta$ be given such that {\rm (A1)-(A4)} are satisfied and fix  $k\geq 1$.
There exists $\Lambda_-\in\R$ such that for all $\lambda\leq\Lambda_-,$ we may find a sequence $(\ov \sigma_m)_{m\geq1}\subset(0,\infty)$ decreasing to zero with the following properties: 
\begin{itemize}
\item[$(i)$] Given $m\geq1 $,  there exist a  curve
$(\sigma_m,\eta_m):(-\e,\e)\to(0,\infty)\times \Ad$ which is continuously  differentiable and consists only of solutions of the problem \eqref{eq:Psi}, 
that is $\Psi(\sigma_m(s),\lambda,\eta_m(s))=0$
for all $|s|<\e.$ 
\item[$(ii)$] We have the following asymptotic  relations
 \[\text{$\sigma_m(s)=\ov\sigma_m+O(s),\quad \eta_m(s)=-s\cos(mkx)+O(s^2)$ \qquad for $s\to0.$} \]
\end{itemize} 
Moreover, for fixed $\lambda\leq\Lambda_-,$ all the solutions of \eqref{eq:Psi} close to $(\ov\sigma_m,\lambda,0)$ are either laminar flows or belong to  the curve $(\sigma_m,\lambda,\eta_m).$
\end{thm}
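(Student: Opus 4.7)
The plan is to apply the Crandall--Rabinowitz theorem (Theorem~\ref{CR}) to the map $\Psi(\cdot,\lambda,\cdot):(0,\infty)\times\Ad\to\wh C^{\alpha}_{e,k}(\s)$, with $\lambda\le\Lambda_-$ fixed and $\sigma$ playing the role of the bifurcation parameter. Hypothesis~(a) is immediate: $\Psi\in C^2$ by \eqref{eq:B0}, and since $\T(\lambda,0)=\psi_\lambda$ depends only on $y$, the trace $\B(0,\psi_\lambda)$ is constant in $x$, so $\Psi(\sigma,\lambda,0)=0$ for every $\sigma>0$. By \eqref{eq:FDd}--\eqref{eq:symb}, $\p_\eta\Psi(\sigma,\lambda,0)$ is the Fourier multiplier with symbols $\mu_m(\sigma,\lambda)$, so the candidate bifurcation values are the roots $\bar\sigma_m>0$ of $\mu_m(\bar\sigma_m,\lambda)=0$, namely
\[
\bar\sigma_m=\frac{\psi_\lambda'(0)^2-g\ov\h(0)-\psi_\lambda'(0)\,w_m'(0)}{(mk)^2}.
\]

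To force these numbers to be positive I would first use \eqref{eq:PLd} together with the uniform bound on $f$ coming from (A2) to show $\psi_\lambda'(y)=-\lambda+O(1)$ uniformly in $y\in[-1,0]$; hence for $\lambda$ sufficiently negative $\psi_\lambda'>0$ on $[-1,0]$, $\psi_\lambda\le 0$ there (by integration from $\psi_\lambda(0)=0$), and $\psi_\lambda'(0)^2>g\ov\h(0)$. Assumption (A4) then gives $2f(y,\psi_\lambda)+g(1+y)\ov\h'(-\psi_\lambda)\le 0$, so the forcing in \eqref{eq:BvD} satisfies $b_m(y)\le -(mk)^2(1+y)\psi_\lambda'(y)\le 0$. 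The weak maximum principle applied to \eqref{eq:BvD} yields $w_m\ge 0$ on $[-1,0]$, whence $w_m'(0)\le 0$ and $-\psi_\lambda'(0)w_m'(0)\ge 0$; choosing $\Lambda_-$ below the thresholds where these inequalities hold uniformly in $m$ makes the numerator of $\bar\sigma_m$ strictly positive. The Fredholm index-zero property of $\p_\eta\Psi(\bar\sigma_m,\lambda,0)$ follows from the splitting flagged after \eqref{eq:symb}: the multiplier decomposes as $-\bar\sigma_m\Delta+K$ with $K\in\kL(\wh C^{2+\alpha}_{e,k}(\s),\wh C^{1+\alpha}_{e,k}(\s))$, hence compact into $\wh C^{\alpha}_{e,k}(\s)$, while $-\Delta$ is an isomorphism on the mean-zero scale. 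Transversality is immediate because only the surface-tension term of $\Psi$ depends on $\sigma$: $\p_\sigma\p_\eta\Psi(\bar\sigma_m,\lambda,0)[\cos(mkx)]=(mk)^2\cos(mkx)$, which lies outside the range of $\p_\eta\Psi(\bar\sigma_m,\lambda,0)$ precisely when $\mu_j(\bar\sigma_m,\lambda)\neq 0$ for every $j\neq m$.

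The remaining and decisive ingredient is a quantitative analysis of $w_m'(0)$ as $m\to\infty$, which must simultaneously deliver the simple-kernel condition $\mu_j(\bar\sigma_m,\lambda)\neq 0$ for $j\neq m$, the monotonicity $\bar\sigma_1>\bar\sigma_2>\cdots$, and the convergence $\bar\sigma_m\to 0$. I would extract all three from a boundary-layer (or explicit Green's function / barrier) treatment of \eqref{eq:BvD}, exploiting the structure $b_m(y)=-(mk)^2(1+y)\psi_\lambda'(y)+O(1)$ to derive an asymptotic of the form $w_m'(0)=-(mk)\psi_\lambda'(0)+o(mk)$. Combined with $\p_\sigma\mu_m=(mk)^2>0$, this forces $\bar\sigma_m\sim\psi_\lambda'(0)^2/(mk)\searrow 0$, and by decreasing $\Lambda_-$ further if necessary the strict monotonicity persists down to $m=1$; the strict separation of the $\bar\sigma_m$ then automatically produces the simple-kernel hypothesis of Theorem~\ref{CR}. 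With the hypotheses of the Crandall--Rabinowitz theorem verified, the local curve $(\sigma_m,\eta_m)$ exists; selecting $x_0=-\cos(mkx)$ as the kernel generator (a sign choice) produces the expansion $\eta_m(s)=-s\cos(mkx)+O(s^2)$, while the local alternative at the end of the theorem is the final clause of Theorem~\ref{CR}. The principal obstacle throughout is this sharp $m$-control of $w_m'(0)$: the coefficient $(mk)^2$ together with the variable zeroth-order term $\p_\psi f(y,\psi_\lambda)$ precludes closed-form computation, and comparison via the maximum principle with explicit exponential barriers appears to be the most efficient route.
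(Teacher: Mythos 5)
Your set-up (the Crandall--Rabinowitz scaffolding, the identification of $\bar\sigma_m$ as the root of $\mu_m(\cdot,\lambda)=0$, the positivity argument via (A4), the maximum principle for $w_m$, and the transversality computation) matches the paper's proof. The issue is in the step you yourself call the ``decisive ingredient'': you conjecture a sharp boundary-layer asymptotic $w_m'(0)=-(mk)\psi_\lambda'(0)+o(mk)$ and propose to derive monotonicity of $(\bar\sigma_m)$, the simple-kernel separation, and $\bar\sigma_m\to0$ from it, but you do not actually prove the asymptotic, and even if you had it, it only yields monotonicity \emph{eventually} (for $m$ large). Your suggestion that ``decreasing $\Lambda_-$ further if necessary'' would extend strict monotonicity down to $m=1$ is not justified: the monotonicity required is in $m$ at fixed $\lambda$, and making $\lambda$ more negative changes the entire sequence without any obvious mechanism forcing $\bar\sigma_1>\bar\sigma_2$. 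Without distinctness of the $\bar\sigma_m$ for all $m\ge1$, the one-dimensional kernel hypothesis of Theorem~\ref{CR} fails and the whole argument collapses.

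The paper handles both facts in a way that avoids any asymptotic expansion. For monotonicity, it replaces $(mk)^2$ by a continuous parameter $\chi>0$, dividing through by $\chi$ to obtain the rescaled problem~\eqref{eq:mu}; differentiating that problem in $\chi$ produces a Dirichlet problem~\eqref{eq:mumu} for $v_\chi=\p_\chi u_\chi$ whose right-hand side is nonnegative (using $u_\chi\ge0$ and again (A4)), so the maximum principle gives $v_\chi\le0$ and Hopf's lemma gives $v_\chi'(0)>0$, i.e.\ $\chi\mapsto w_m'(0)/(mk)^2$ is increasing at \emph{every} $\chi>0$, hence for every $m\ge1$, not just asymptotically. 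For the limit $\bar\sigma_m\to0$, the paper bounds $\|w_m/(mk)^2\|_{C([-1,0])}$ uniformly (Theorem~3.7 of \cite{GT}), moves the term $\p_\psi f\,w_m$ to the right-hand side, and solves the resulting constant-coefficient ODE~\eqref{eq:va} explicitly via its Green's kernel, giving $|u_m'(0)|\le C\cosh(mk)/(mk\sinh(mk))\to 0$. That is a far weaker estimate than the $O(mk)$ asymptotic you aimed at, and it suffices. So your proof as written has a genuine gap in exactly the step you flagged; I would replace the boundary-layer strategy with the parameter-differentiation argument for monotonicity and the explicit Green's-function bound for the vanishing of $\bar\sigma_m$.
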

\begin{rem} \label{R:1} If we replace (A4) by
\begin{itemize}
\item[(A4$'$)] $2f(y,\psi)+g(1+y)\ov\h'(-\psi)\geq 0$  for all $(y,\psi)\in[-1,0]\times[0,\infty)$
\end{itemize}
then the conclusion of Theorem \ref{MT1} remains true with the modification that in this case we find 
$\Lambda_+\in\R$ such that the assertions $(i)$ and $(ii)$ of the theorem hold  for all   $\lambda\geq\Lambda_+.$
 In this case we obtain travelling wave solutions of the original problem \eqref{eq:S1}, which have positive mass flux.
\end{rem}

For our second main result  we  require, in addition to {\rm (A1)-(A4)}, that 
\begin{align*}
&\text{(B1)}\quad  \p_{\psi\psi}f\geq0 \ \text{on} \ [-1,0]\times(-\infty,0];\\
&\text{(B2)}\quad 2\p_\psi f(y,\psi)-g(1+y)\ov\h''(-\psi)\geq 0  \ \text{for  all} \  (y,\psi)\in[-1,0]\times(-\infty,0];\\
&\text{(B3)}\quad \p_{\psi} f(y,0)\leq 2  \ \text{for  all} \  y\in[-1,0].
\end{align*}
\begin{rem}
For the particular setting of homogeneous flows ($\h$ constant), we can see that (B1) requires that the vorticity function $\gamma''(-\psi)\geq 0$, (B2) is simply equivalent to (A3), and (B3) requires that on the surface $\gamma'(0)\geq -2$.  
\end{rem}

\begin{thm}\label{MT2} Let $\ov \h$ and $\beta$ be given such that {\rm (A1)-(A4)} and {\rm (B1)-(B3)} are satisfied and let $\sigma>0$ be fixed.
There exists a positive integer $K\in\N$ and for all $k\geq K$  a sequence $(\ov \lambda_m)_{m\geq1}\subset\R$ with $\ov\lambda_m\to-\infty$ and: 
\begin{itemize}
\item[$(i)$] Given $m\geq1 $,  there exist a curve $(\lambda_m,\eta_m):(-\e,\e)\to\R\times \Ad$  which is
 continuously  differentiable and consists only of solutions of the problem \eqref{eq:Psi}, that is $\Psi(\sigma,\lambda_m(s),\eta_m(s))=0$
for all $|s|<\e.$ 
\item[$(ii)$] We have the following asymptotic  relations
 \[\text{$\lambda_m(s)=\ov\lambda_m+O(s),\quad \eta_m(s)=-s\cos(mkx)+O(s^2)$ \qquad for $s\to0.$} \]
\end{itemize} 
Moreover,  all the solutions of \eqref{eq:Psi} close to $(\sigma,\ov\lambda_m,0)$ are either laminar flows or belong to  the curve $(\sigma,\lambda_m,\eta_m).$
\end{thm}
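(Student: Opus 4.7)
The strategy is to apply the Crandall--Rabinowitz theorem to $\Psi(\sigma,\cdot,\cdot):\R\times\Ad\to\wh C_{e,k}^{\alpha}(\s)$, using $\lambda$ as the bifurcation parameter and $\cos(mkx)$ as the distinguished kernel element. Hypothesis (a) of Theorem~\ref{CR} is immediate since $\Psi(\sigma,\lambda,0)=0$ for all $\lambda\in\R$. By \eqref{eq:FDd}--\eqref{eq:symb}, $\p_\eta\Psi(\sigma,\lambda,0)$ is the Fourier multiplier with symbol $(\mu_m(\sigma,\lambda))_{m\geq 1}$, so hypotheses (b) and (c) reduce, respectively, to locating values $\ov\lambda_m$ such that
\[
\mu_m(\sigma,\ov\lambda_m)=0,\quad \mu_n(\sigma,\ov\lambda_m)\neq 0 \ \text{for } n\neq m,\quad \p_\lambda\mu_m(\sigma,\ov\lambda_m)\neq 0.
\]
Once (b) and (c) are verified, the curve of solutions, the asymptotic expansions in $(ii)$, and the local uniqueness statement all follow directly from Theorem~\ref{CR}.

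\textbf{Construction of $\ov\lambda_m$.} First I would analyse $\mu_m(\sigma,\lambda)$ as $\lambda\to -\infty$. Interchanging the order of integration in \eqref{eq:PLd} yields
\[
\psi_\lambda'(0) = -\lambda + \int_{-1}^{0}(1+s)f(s,\psi_\lambda(s))\,ds,
\]
and the integral is uniformly bounded in $\lambda$ thanks to (A2), so $\psi_\lambda'(0)\to +\infty$ as $\lambda\to-\infty$. For such $\lambda$, the weak maximum principle gives $\psi_\lambda\le 0$ on $\ov\0$ together with $\psi_\lambda'\ge 0$ on $[-1,0]$, and (A4) then forces the source $b_m$ in \eqref{eq:BvD} to be nonpositive. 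Invoking (A3) and the elliptic maximum principle, augmented by the Hopf lemma at $y=0$, I would deduce $w_m\ge 0$ in $(-1,0)$ and $w_m'(0)\le 0$. Substituting into \eqref{eq:symb} gives $\mu_m(\sigma,\lambda)\to -\infty$, while for $\lambda$ chosen so that $\psi_\lambda'(0)$ is small the positive term $\sigma(mk)^2+g\ov\h(0)$ dominates and $\mu_m(\sigma,\lambda)>0$. The intermediate value theorem then produces $\ov\lambda_m$, and the identity $\sigma(mk)^2+g\ov\h(0)=\psi_{\ov\lambda_m}'^2(0)-\psi_{\ov\lambda_m}'(0)w_m'(0)$ together with elliptic bounds on $w_m'(0)$ in terms of $mk$ forces $\psi_{\ov\lambda_m}'(0)\to+\infty$, whence $\ov\lambda_m\to -\infty$.

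\textbf{Simplicity of the kernel.} To verify $\mu_n(\sigma,\ov\lambda_m)\neq 0$ for $n\neq m$, I would establish strict monotonicity of $n\mapsto\mu_n(\sigma,\lambda)$ on the relevant range of $\lambda$, provided $k\geq K$ is sufficiently large. Subtracting the versions of \eqref{eq:BvD} for indices $n>m$, the difference $v:=w_n-w_m$ solves a linear Dirichlet problem whose forcing can be signed by means of (B1)--(B2); the weak maximum principle and Hopf then give a definite sign for $v'(0)$, and combined with the dominant term $\sigma((nk)^2-(mk)^2)$ this yields $\mu_n(\sigma,\ov\lambda_m)>\mu_m(\sigma,\ov\lambda_m)=0$, so that $\Kern(\p_\eta\Psi(\sigma,\ov\lambda_m,0))$ is spanned precisely by $\cos(mkx)$. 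The large-$k$ threshold $K$ is what absorbs the correction terms uniformly in $m$.

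\textbf{Transversality and main obstacle.} For condition (c) I would differentiate \eqref{eq:PL} and \eqref{eq:BvD} with respect to $\lambda$, obtaining Dirichlet problems for $\p_\lambda\psi_\lambda$ and $\p_\lambda w_m$ whose forcing terms can be signed via (B1)--(B2), and use (B3) to fix the sign of the surface contribution in the resulting expression
\[
\p_\lambda\mu_m(\sigma,\ov\lambda_m)=\p_\lambda\psi_\lambda'(0)\bigl(-2\psi_\lambda'(0)+w_m'(0)\bigr)+\psi_\lambda'(0)\p_\lambda w_m'(0).
\]
The hardest step, in my view, is this simultaneous maximum-principle analysis of $w_m$, $w_n-w_m$ and $\p_\lambda w_m$ under only (A3)--(A4) and (B1)--(B3): the threshold $K$, the divergence $\ov\lambda_m\to -\infty$, and the strict sign of $\p_\lambda\mu_m$ all emerge from the requirement that these bounds hold uniformly in $m$ and in $\lambda$ near the bifurcation point. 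With (b) and (c) in hand, an application of Theorem~\ref{CR} produces the curves $(\lambda_m,\eta_m)$ and their asymptotic description, completing the proof.
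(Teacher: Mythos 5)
Your overall strategy matches the paper's: use $\lambda$ as the bifurcation parameter, locate $\ov\lambda_m$ with $\mu_m(\sigma,\ov\lambda_m)=0$ via an intermediate-value argument, show $\ov\lambda_m\to-\infty$ by dividing \eqref{eq:symb} by $(mk)^2$, verify one-dimensional kernel and transversality using maximum-principle sign analyses, then apply Theorem~\ref{CR}. The transversality analysis (differentiating \eqref{eq:PL} and \eqref{eq:BvD} in $\lambda$ and invoking (B1)--(B3)) is also the paper's.

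However, the ``simplicity of the kernel'' step as you present it has a genuine gap. You propose to subtract the equations in \eqref{eq:BvD} directly and examine $v:=w_n-w_m$. Doing so gives
\[
v''-\bigl((mk)^2+\p_\psi f(y,\psi_\lambda)\bigr)v=\bigl((nk)^2-(mk)^2\bigr)\bigl[w_n-(1+y)\psi_\lambda'\bigr],
\]
and the bracketed term $w_n-(1+y)\psi_\lambda'$ has \emph{no} definite sign: both $w_n\ge 0$ and $(1+y)\psi_\lambda'\ge 0$, so the weak maximum principle does not apply. Moreover, (B1)--(B2) do not enter here at all, since those hypotheses involve $\p_{\psi\psi}f$ and $\ov\h''$, which cancel when you subtract two copies of \eqref{eq:BvD} evaluated at the same $\psi_\lambda$. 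What actually works is to compare the \emph{normalised} quantities $u_m:=w_m/(mk)^2$: their difference solves a problem whose forcing is
\[
\bigl[2f+g(1+y)\ov\h'\bigr]\Bigl(\tfrac{1}{(nk)^2}-\tfrac{1}{(mk)^2}\Bigr)+\bigl((nk)^2-(mk)^2\bigr)u_n\ge 0
\]
for $n>m$, thanks to (A4) and $u_n\ge 0$, after which the maximum principle and Hopf give $u_n'(0)>u_m'(0)$, i.e.\ that $\bigl(w_m'(0)/(mk)^2\bigr)_m$ is increasing. The paper achieves precisely this by regarding $\chi=(mk)^2$ as a continuous parameter and differentiating \eqref{eq:mu} in $\chi$. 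Once you have this monotonicity, rewriting $\mu_m=0$ as $(mk)^2[\sigma+\psi_\lambda'(0)w_m'(0)/(mk)^2]=\psi_\lambda'^2(0)-g\ov\h(0)\ge 0$ gives $\mu_n(\sigma,\ov\lambda_m)>0$ for $n>m$ (and analogously $<0$ for $n<m$), establishing that the kernel is spanned by $\cos(mkx)$ alone. So: replace the raw subtraction by the normalised comparison, and drop the appeal to (B1)--(B2) in that step (they are needed only for $\p_\lambda w_m'(0)\ge 0$ in the transversality argument, exactly as you describe).
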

\begin{rem}
We note that for $s$ sufficiently small, it follows directly from Theorem \ref{MT1} $(ii)$ and Theorem \ref{MT2} $(ii)$ that the resulting non-laminar solutions have a wave surface profile $\eta_m$  which has minimal period $2\pi/mk$, $\eta_m$ has a unique crest and trough per period, and $\eta_m$ is strictly monotone from crest to trough.
\end{rem}
  
Before proving the theorems, we consider more closely the functions $\psi_\lambda.$
Since  by Lemma \ref{L:1} the mapping $\lambda\mapsto\psi_\lambda$ is of class $C^2,$ we differentiate the equations of \eqref{eq:DPp-}, when $\eta=0,$ with respect to $\lambda$ and find that 
$\p_\lambda\psi_\lambda$ is the solution of the Dirichlet problem:
 \begin{equation}\label{eq:Pl}
\left\{
\begin{array}{rllll}
\Delta u&=& \p_\psi f(y,\psi_\lambda)u&\text{in}\ \0,\\
u&=&0&\text{on}\ y=0,\\
u&=&1&\text{on} \ y=-1.
\end{array}
\right.
\end{equation}
Invoking (A3), we obtain that $\p_\lambda\psi_\lambda(0)\leq \p_\lambda\psi_\lambda\leq\p_\lambda\psi_\lambda(-1)$, and since $\p_\lambda\psi_\lambda$ is not constant
 Hopf's principle ensures that 
\[
\text{$\p_\lambda\psi'_\lambda(0)<0\quad \text{and}\quad \p_\lambda\psi_\lambda'(-1)<0$ \qquad for all $\lambda\in\R.$}
\] 
Consequently, 
 $\lambda\mapsto \psi'_\lambda(0)$
 is a  decreasing function and, recalling (A2) and \eqref{eq:PLd}, we obtain  that and there exists a {threshold value} $\Lambda\in\R$ with 
\begin{equation}\label{eq:bub}
\text{$\psi'_\Lambda(0)=0$} \quad  \text{and} \quad
\left\{\begin{array}{cccc}
\psi_\lambda'(0)>0,&\text{for} \ \lambda<\Lambda,\\[1ex]
\psi_\lambda'(0)<0,&\text{for} \ \lambda>\Lambda.
\end{array}
\right. 
\end{equation}
\begin{proof}[Proof of Theorem \ref{MT1}]
Fix $k\in\N, k\geq 1.$ 
For each $m\in\N,$ $m\geq1,$ and $\lambda\in\R$, we let
\begin{equation}\label{eq:BE}
 \ov\sigma_m(\lambda)=:\ov\sigma_m:=-\frac{g\ov\h(0)-\psi'^2_\lambda(0)+\psi_\lambda'(0)w_m'(0)}{(mk)^2},
 \end{equation} 
be the solution of the equation $\mu_m(\lambda,\cdot)=0$.

We first show that if $\lambda$ is small, then $\ov\sigma_m$ are all positive.
To this end, we claim that the right hand side $b_m$ of \eqref{eq:BvD}$_1$ is non-positive  for all $m\in\N$, provided $\lambda$ is small enough.
Indeed, recalling  \eqref{eq:PLd}, we see that $\psi_\lambda'\to_{\lambda\to-\infty}\infty$ uniformly in $y\in[-1,0]$, so that
 there exists $\Lambda_-\leq\Lambda$ with the following properties:
\begin{equation}\label{L-}
\text{  $\psi_\lambda'\geq 0 \qquad $ and $\qquad  g\ov\h(0)\leq\psi'^2_\lambda(0)$ \quad for all $\lambda\leq\Lambda_-.$}
\end{equation}
The claim follows now from  (A4) 
  and the fact that $\psi_\lambda\leq 0$ for  all $\lambda\leq \Lambda_-.$ 
Let thus $\lambda\leq\Lambda_-$ be fixed. 
The weak  elliptic maximum principle applied to \eqref{eq:BvD} ensures that $w_m$ attains its minimum on the boundary, that is $w_m\geq0$, and  Hopf's principle leads us to $w_m'(0)<0$ for all $m\in\N.$
Consequently, $\ov\sigma_m>0$ for all $m\geq 1$ and $\mu_m(\lambda,\ov \sigma_m)=0$.

Next we prove that   $(\ov\sigma_m)$ is a decreasing sequence.
Recalling \eqref{eq:BE}, it suffices to show that $\left( w_m'(0)/(mk)^2\right)_{m\geq1}$
 is  non-decreasing.
In order to do this, we consider the parameter dependent problem
\begin{equation}\label{eq:mu}
\left\{
\begin{array}{rllll}
 u_\chi''-(\chi+\p_\psi f(y,\psi_\lambda))u_\chi&=&\displaystyle\frac{2f(y,\psi_\lambda)+g(1+y)\ov\h'(-\psi_\lambda)}{\chi}-(1+y)\psi_\lambda',\ -1<y<0,\\[1ex]
u_\chi(0)=u_\chi(-1)&=&0.
\end{array}
\right.
\end{equation}
which is obtained by replacing $(mk)^2$ in \eqref{eq:BvD} by $\chi\in(0,\infty) $ and dividing by this variable the equations of \eqref{eq:BvD}.
Particularly, $w_m=(mk)^2u_{(mk)^2} $, and the arguments presented above show that $u_\chi\geq0$ for all $\chi>0.$ 
Clearly, the solution $u_\chi$ of  \eqref{eq:mu} depends smoothly upon $\chi.$
Differentiating the equations in \eqref{eq:mu} with respect to $\chi$ yields that  $v_\chi:=\p_\chi u_\chi$ is the solution of
\begin{equation}\label{eq:mumu}
\left\{
\begin{array}{rllll}
 v_\chi''-(\chi+\p_\psi f(y,\psi_\lambda))v_\chi&=&\displaystyle u_\chi-\frac{2f(y,\psi_\lambda)+g(1+y)\ov\h'(-\psi_\lambda)}{\chi^2} \geq0, &-1<y<0,\\[1ex]
v_\chi(0)=v_\chi(-1)&=&0.
\end{array}
\right.
\end{equation}
Whence, $v_\chi\leq0$ and Hopf's principle implies that $v_\chi'(0)=\p_\chi(u_\chi'(0))>0$ for all $\chi\geq1.$
Particularly, the sequence $\left(w_m'(0)/(mk)^2\right)_{m\geq1}$ is  increasing.

We have thus shown that $\mu_m(\ov\sigma_p,\lambda)=0$ if and only if $m=p\geq1.$ 
Consequently, we have 
\begin{equation}\label{eq:B1}
\Kern \p_\eta\Psi(\ov\sigma_m,\lambda,0)=\spn\{\cos(mkx)\},\quad  \im \p_\eta\Psi(\ov\sigma_m,\lambda,0)\oplus \{\cos(mkx)\}=\wh C^{\alpha}_{e,k}(\mathbb S),
\end{equation}and so $\p_\eta\Psi(\ov\sigma_m,\lambda,0)$ is a Fredholm operator of index zero.
Moreover, in virtue of \eqref{eq:symb}, the transversality condition \eqref{cr3} of Theorem \ref{CR} is fulfilled too
\begin{equation}\label{eq:B2}
\p_{\sigma\eta}\Psi(\ov\sigma_m,\lambda,0)[\cos(mkx)]=2(mk)^2\cos(mkx)\notin\im \p_\eta\Psi(\ov\sigma_m,\lambda,0).
\end{equation}
Gathering \eqref{eq:B0}, \eqref{eq:B1}, and \eqref{eq:B2}  we may apply Theorem \ref{CR} to equation \eqref{eq:Psi} and obtain the desired existence result.  

We finish by proving that $(\overline \sigma_m)$ converges to zero.
We apply the estimate from Theorem 3.7 in \cite{GT} to the Dirichlet problem \eqref{eq:mu}, when $\chi=(mk)^2,$
and  obtain that
\begin{equation}\label{eq:bw_p2}
\sup_m\|w_m/(mk)^2\|_{C([-1,0])}<\infty.
\end{equation}
Letting $u_m:=w_m/(mk)^2,$ we re-write \eqref{eq:BvD} as follows
\begin{equation}\label{eq:va}
\left\{
\begin{array}{rllll}
 u_m''-(mk)^2u_m&=&B_m(y),&-1<y<0,\\
u_m(0)=u_m(-1)&=&0,
\end{array}
\right.
\end{equation}
where $B_m(y):=(\p_\psi f(y,\psi_{\lambda}(y))w_m(y)+b_m(y))/(mk)^2$ is, by \eqref{eq:bw_p2}, a bounded sequence in $C([0,1]).$
The general solution of \eqref{eq:va} is
\[u_m(y)=\frac{\sinh(mky)}{\sinh({mk})}\int_{-1}^0\frac{\sinh({mk}(s+1))}{{mk}}B_m(s)\, ds+\int_0^y\frac{\sinh({mk}(y-s))}{mk}B_m(s)\, ds,\]
and particularly 
\[u_m'(0)=\frac{mk}{\sinh(mk)}\int_{-1}^0\frac{\sinh(mk(s+1))}{mk}B_m(s)\, ds\leq\sup_m\|B_m\|_{C([-1,0])}\frac{\cosh(mk)}{mk\sinh(mk)}.\]
Consequently, $u_m'(0)=w_m'(0)/(mk)^2\to_{m\to\infty}0.$ 
This proves our claim.
\end{proof}

We prove now our second main existence result, Theorem \ref{MT2}. 
Though it is independent of Theorem \ref{MT1}, the assertions of the latter are satisfied and we shall make use of some of the relations derived in its proof. 
\begin{proof}[Proof of Theorem \ref{MT2}]  Let $\sigma>0$ be fixed, and let $\Lambda $ and $\Lambda_-$ be the constants defined by \eqref{eq:bub} and \eqref{L-}, respectively.
Given $m\in\N$, $m\geq 1,$ and $\lambda\leq\Lambda_-$, it holds that  $w_m'(0)<0$ and, by \eqref{eq:PLd} and (A2),  
we get $\lim_{\lambda\to-\infty}\mu_m(\sigma,\lambda)=-\infty$ and $\mu_m(\sigma,\Lambda)=\sigma(mk)^2+g\ov\h(0)>0.$
Therefore,
\[
\ov\lambda_m(\sigma)=:\ov\lambda_m:={ \min}\{\lambda<\Lambda\,:\, \mu_m(\sigma,\lambda)=0\}
\]
is a well-defined constant for all $m\in\N.$  
 In order to apply Theorem \ref{CR} we have to make sure that $\mu_m(\sigma,\ov \lambda_n)\neq0$ for all $m\neq n $ and that $\p_\lambda\mu_m(\sigma,\ov\lambda_m)\neq0$ for all $m\geq 1.$

 We first claim that $\ov\lambda_m\to_{m\to\infty}-\infty.$ 
To prove this, we  assume that $(\ov\lambda_m)_m$ has a bounded subsequence (which we denote again by $(\ov\lambda_m)_m)$. 
From the arguments in Theorem~\ref{MT1}, where we established that $(\overline\sigma_m)$ converges to zero, we deduce, due to the boundedness of $(\ov\lambda_m)_m$, that $w'_m(0)/(mk)^2 \rightarrow 0$. Our claim follows by  dividing \eqref{eq:symb} by $(mk)^2$ and letting $m\to\infty$.
 
This shows  (also for $k=1$) that indeed $\ov\lambda_m\to_{m\to\infty}-\infty.$
 Whence, we may choose $K\in\N$ such that if $k\geq K,$ then   $\ov\lambda_m\leq\Lambda_- $ for all $m\geq1.$

We prove now that if $k\geq K$ and $m\geq1,$ then $\ov\lambda_m$ is the unique point within $(-\infty,\Lambda_-]$ such that $\mu_m(\sigma,\lambda)=0.$
To this end, we differentiate $\mu_m$ with respect to $\lambda$ and observe that, for $\lambda\leq\Lambda_-$,
 \[
\p_\lambda\mu_m(\sigma, \lambda)=-2\psi_{\lambda}'(0)\p_\lambda\psi_{\lambda}'(0)+\p_\lambda\psi_{\lambda}'(0)w_m'(0)+\psi_{\lambda}'(0)\p_\lambda w_m'(0)>0
\]
provided $\p_\lambda w_m'(0)\geq0.$ 
 We show now that this is indeed the case.
Differentiating both equations of \eqref{eq:BvD} with respect to $\lambda $, yields that $u:=\p_\lambda w_m$ is the solution of the following boundary value  problem
\begin{equation}\label{eq:BD}
\left\{
\begin{array}{rllll}
 u''-((mk)^2+\p_\psi f(y,\psi_\lambda(y)))u&=&\p_{\psi\psi}f(y,\psi_\lambda(y)))\p_\lambda\psi_\lambda w_m\\[1ex]
&&+\left(2\p_\psi f(y,\psi_\lambda(y))-g(1+y)\ov\h''(-\psi_\lambda)\right)\p_\lambda\psi_\lambda\\[1ex]
&&-(mk)^2(1+y)\p_\lambda \psi_\lambda'(y), \qquad -1<y<0,\\[1ex]
u(0)=u(-1)&=&0.
\end{array}
\right.
\end{equation}
It is not difficult to see that
\[
\p_\lambda\psi_\lambda'(y)=-1-\int_{-1}^0\int_{-1}^t\p_\psi f(s,\psi_\lambda(s))\p_\lambda\psi_\lambda(s)\, ds\, dt+\int_{-1}^y \p_\psi f(s,\psi_\lambda(s))\p_\lambda\psi_\lambda(s)\, ds,
\]
and gathering (A3),  (B1), (B3), $\psi_\lambda\leq 0,$ and $\p_\lambda\psi_\lambda\in[0,1]$ we conclude that 
\begin{align*}
\p_\lambda\psi_\lambda'(y)&\leq \p_\lambda\psi_\lambda'(0)=-1+\int_{-1}^0\int_{t}^0 \p_\psi f(s,\psi_\lambda(s))\p_\lambda\psi_\lambda(s)\, ds\, dt\\
& \leq-1+\int_{-1}^0\int_{t}^0 \p_\psi f(s,0)\, ds\, dt\leq 0.
\end{align*}
 Since $\lambda\leq\Lambda_-,$ we conclude from (B1) and (B2) that the  right-hand side of \eqref{eq:BD}$_1$ is positive.
Consequently,  $u\leq0$ and Hopf's principle implies that $u'(0)>0,$ that is $\p_\lambda w_m'(0)>0.$
This proves that $\ov\lambda_m$ is the unique solution of   $\mu_m(\sigma,\cdot)=0$   in $(-\infty,\Lambda_-]$, and shows moreover that
$\p_\lambda\mu_m(\sigma,\ov\lambda_m)>0$ for all $m\geq1.$

Finally, we prove that if $k\geq K,$ then $\ov\lambda_n\neq\ov\lambda_m$ for all $m\neq n.$
Let thus  $m>n\geq 1$.  
We assume by contradiction that $\ov\lambda_n=\ov\lambda_m=:\lambda$.
Then, since $0=\mu_m(\sigma,\lambda)=\mu_n(\sigma,\lambda),$
we conclude  
\[
(mk)^2\left(\sigma+\frac{\psi_\lambda'(0)w_m'(0)}{(mk)^2}\right)=(nk)^2\left(\sigma+\frac{\psi_\lambda'(0)w_n'(0)}{(nk)^2}\right)>0.
\] 
But,  for fixed $\lambda\leq\Lambda_-$, the sequence $(w_m'(0)/(mk)^2)$ is increasing and we have
\[\sigma+\frac{\psi_\lambda'(0)w_m'(0)}{(mk)^2}>\sigma+\frac{\psi_\lambda'(0)w_n'(0)}{(nk)^2}>0.\]
This shows that in fact $\mu_m(\sigma,\lambda)>\mu_n(\sigma,\lambda)$, which is a contradiction.
Therefore 
\begin{equation}\label{eq:B6}
\Kern \p_\eta\Psi(\sigma,\ov\lambda_m,0)=\spn\{\cos(mkx)\},\quad  \im \p_\eta\Psi(\sigma,\ov\lambda_m,0)\oplus \{\cos(mkx)\}=\wh C^{\alpha}_{e,k}(\mathbb S),
\end{equation} implying that $\p_\eta\Psi(\sigma,\ov\lambda_m,0)$ is a Fredholm operator of index zero.
Summarising, if $k\geq K,$   then $\mu_m(\sigma,\ov\lambda_n)=0$ if and only if $m=n,$ and $\p_\lambda\mu_m(\sigma,\ov\lambda_m)>0$ for all $m\geq 1.$
Thus,  we may apply Theorem \ref{CR} and obtain the desired result.
\end{proof}


\begin{acknowledgement*}
The authors would like to thank the organisers of the programme ``Nonlinear Water
Waves'',  Erwin Schr\"odinger Institute (Vienna), April--June 2011, where work on this paper was undertaken. DH would like to acknowledge the support of the Royal Irish Academy.
\end{acknowledgement*}

  \end{document}